\newcommand{\R}{\mathbbm R}
\newcommand{\N}{\mathbbm N}
\newtheorem{theorem}{Theorem}
\newtheorem{lemma}[theorem]{Lemma}
\theoremstyle{definition}
\theoremstyle{remark}
\numberwithin{equation}{section}
\numberwithin{theorem}{section}
\newcommand{\coloneqq}{\,\raise0.08ex\hbox{\textnormal{:}}\!\!=}
\def\XXint#1#2#3{{\setbox0=\hbox{$#1{#2#3}{\int}$}
     \vcenter{\hbox{$#2#3$}}\kern-.5\wd0}}
\begin{document}

\title[Well-posedness of the supercritical Lane-Emden heat flow]
{Well-posedness of the supercritical Lane-Emden heat flow in Morrey spaces}
\author{Simon Blatt}
\thanks{S. B. gratefully acknowledges the support of the Forschungsinstitut f\"ur 
Mathematik (FIM), ETH Zurich}
\address[Simon Blatt] {Fachbereich Mathematik, Universit\"at Salzburg, Hellbrunner Str. 34, 
A-5020 Salzburg}
\email{simon.blatt@sbg.ac.at}
\author{Michael Struwe}
\address[Michael Struwe]{Departement Mathematik\\ETH-Z\"urich\\CH-8092 Z\"urich}
\email{michael.struwe@math.ethz.ch}

\date{\today}

\begin{abstract}
For any smoothly bounded domain $\Omega\subset\R^n$, $n\ge 3$, and any exponent
$p>2^*=2n/(n-2)$ we study the Lane-Emden heat flow $u_t-\Delta u = |u|^{p-2}u$ on 
$\Omega\times]0,\infty[$ and establish local and global well-posedness results 
for the initial value problem with suitably small initial data $u\big|_{t=0}=u_0$ in the 
Morrey space $L^{2,\lambda}(\Omega)$, where $\lambda=4/(p-2)$. We contrast our
results with results on instantaneous complete blow-up of the flow for 
certain large data in this space, similar to ill-posedness results of 
Galaktionov-Vazquez for the Lane-Emden flow on $\R^n$. 
\end{abstract}

\maketitle

\section{Introduction} 
Let $\Omega$ be a smoothly bounded domain in $\R^n$, $n\ge 3$, and let $T>0$.
Given initial data $u_0$, we consider the Lane-Emden heat flow
\begin{equation}\label{1.1}
  u_t -\Delta u = |u|^{p-2}u \hbox{ on } \Omega\times [0,T[,\ 
  u=0 \hbox{ on } \partial\Omega\times [0,T[,\ u\big|_{t=0}=u_0
\end{equation}
for a given exponent $p>2^*=2n/(n-2)$, that is, in the ``supercritical'' regime. 

As observed by Matano-Merle \cite{Matano-Merle-2009}, p. 1048, 
the initial value problem \eqref{1.1} may be ill-posed for certain
data $u_0\in H_0^1\cap L^p(\Omega)$; see also our results in Section 4 below.
However, as we had shown in two previous papers 
\cite{Blatt-Struwe-2013}, Section 6.5, \cite{Blatt-Struwe-2014}, Remark 3.3,
the Cauchy problem \eqref{1.1} is globally
well-posed for suitably small data $u_0$ belonging to the Morrey space 
$H_0^{1,\mu}\cap L^{p,\mu}(\Omega)$, where $\mu=\frac{2p}{p-2}<n$. 
Here we go one step further and show that problem \eqref{1.1} even is  
well-posed for suitably small data $u_0\in L^{2,\lambda}(\Omega)\supset L^{p,\mu}(\Omega)$, 
where $\lambda=\frac{2\mu}{p}=\frac{4}{p-2}=\mu-2$, thus considerably improving on
the results of Brezis-Cazenave \cite{Brezis-Cazenave-1996} or Weissler \cite{Weissler-1981}
for initial data in $L^q$, $q\ge n(p-2)/2$. Our results are similar to 
results of Taylor \cite{Taylor-1992} who demonstrated local and global well-posedness
of the Cauchy problem for the equation 
\begin{equation*}
  u_t -\Delta u = DQ(u) \hbox{ on } \Omega\times [0,T[,
\end{equation*}
for suitably small initial data $u\big|_{t=0}=u_0$ in a Morrey space, 
where $D$ is a linear differential operator of first order and $Q$ is a quadratic form in $u$
as in the Navier-Stokes system. However, similar to the work of Koch-Tataru \cite{Koch-Tataru-2001}
on the Navier-Stokes system, in our treatment of \eqref{1.1} we are able to completely 
avoid the use of pseudodifferential operators in favor of simple integration by parts.

The study of the initial value problem for \eqref{1.1} for non-smooth
initial data is motivated by the question whether a solution $u$ of \eqref{1.1}
blowing up at some time $T<\infty$ can be extended as a weak solution of \eqref{1.1} 
on a time interval $]0,T_1[$ for some $T_1>T$. Note that if such a continuation is possible
and if the extended solution still satisfies the monotonicity formula \cite{Blatt-Struwe-2014},
Proposition 3.1, it follows that $u(T)\in L^{2,\lambda}(\Omega)$. Hence, the regularity 
assumption $u_0\in L^{2,\lambda}(\Omega)$ is necessary from this point of view 
and cannot be weakened. However, our results in Section 4 
show that the condition $u(T)\in L^{2,\lambda}(\Omega)$ in general is not sufficient for
continuation and that a smallness condition as in our Theorems \ref{thm1.1}, \ref{thm1.2}
below is needed.

Note that the question of continuation after blow-up only is of relevance in the 
supercritical case when $p>2^*$. 
Indeed, by work of Baras-Cohen \cite{Baras-Cohen-1987} in the subcritical case 
$p<2^*$ a classical solution $u\ge 0$ to \eqref{1.1} blowing up at 
some time $T<\infty$ always undergoes ``complete blow-up'' (see Section 4 for a definition),
and $u$ cannot be continued as a (weak) solution to \eqref{1.1} after time $T$ in any reasonable 
way. In \cite{Galaktionov-Vazquez-1997} Galaktionov und Vazquez extend the Baras-Cohen result 
to the critical case $p=2^*$.

In the next section we state our well-posedness results, which we prove in Section 3. 
In Section 4 we then contrast these results with results on instantaneous complete 
blow-up of the flow for certain large data. These results are similar to ill-posedness 
results of Galaktionov-Vazquez for the Lane-Emden flow on $\R^n$; see for instance 
\cite{Galaktionov-Vazquez-1997}, Theorem 10.4. We conclude the paper 
with some open problems.

\section{Global and local well-posedness}
Recall that for any $1\le p<\infty$, $0<\lambda<n$ (in Adams' \cite{Adams-75} notation)
a function $f\in L^p(\Omega)$ on a domain $\Omega \subset \R^n$ belongs to the Morrey space 
$L^{p,\lambda}(\Omega)$ if
\begin{equation}\label{1.2}
 \|f\|^p_{L^{p,\lambda}(\Omega)} := \sup_{x_0 \in R^n,\; r>0} r^{\lambda-n} 
\int_{B_r(x_0) \cap \Omega} |f|^p dx <\infty, 
\end{equation}
where $B_r(x_0)$ denotes the Euclidean ball of radius $r>0$ centered at $x_0$.
Moreover, we write $f\in L_0^{p,\lambda}(\Omega)$ whenever $f\in L^{p,\lambda}(\Omega)$
satisfies 
\begin{equation*}
\sup_{x_0\in R^n,\;0<r<r_0} r^{\lambda-n} 
\int_{B_r(x_0)\cap \Omega}|f|^p dx\to 0 \hbox{ as } r_0\downarrow 0.
\end{equation*}

Similarly, for any $1\le p<\infty$, $0<\mu<n+2$ a function $f\in L^p(E)$ on 
$E\subset\R^n\times\R$ belongs to the parabolic Morrey space $L^{p,\mu}(E)$ if
\begin{equation*}
 \|f\|^p_{L^{p,\mu}(E)} := \sup_{z_0=(x_0,t_0) \in R^{n+1}, r>0} r^{\mu-(n+2)} 
\int_{P_r(z_0) \cap E} |f|^p dz < \infty,
\end{equation*}
where $P_r(x,t)$ denotes the backwards parabolic cylinder $P_r(x,t)=B_r(x)\times ]t-r^2,t[$. 

Given $p>2^*$, we now fix the Morrey exponents $\mu=\frac{2p}{p-2}$ and 
$\lambda=\frac{4}{p-2}=\mu-2$, which are natural for the study of problem \eqref{1.1}. 

Throughout the following a function $u$ will be called a smooth solution of \eqref{1.1} on $]0,T[$
if $u\in C^1(\bar\Omega\times]0,T[)$ with $u_t\in L^2_{loc}(\bar{\Omega}\times[0,T[)$ solves 
\eqref{1.1} in the sense of distributions and achieves the initial data in the sense of traces. 
By standard regularity theory then $u$ also is of class $C^2$ with respect to $x$ and 
satisfies \eqref{1.1} classically. Schauder theory, finally, yields even higher regularity
to the extent allowed by smoothness of the nonlinearity $g(v)=|v|^{p-2}v$. The function $u$ will 
be called a global smooth solution of \eqref{1.1} if the above holds with $T=\infty$. 

Our results on local and global well-posedness are summarized in the following theorems.

\begin{theorem}\label{thm1.1}
Let $\Omega\subset\R^n$ be a smoothly bounded domain, $n\ge 3$. 
There exists a constant $\varepsilon_0>0$ such that for any function 
$u_0\in L^{2,\lambda}(\Omega)$ satisfying $\|u_0\|_{L^{2,\lambda}}<\varepsilon_0$ there 
is a unique global smooth solution $u$ to \eqref{1.1} on $\Omega\times]0,\infty[$. 
\end{theorem}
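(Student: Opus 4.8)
The plan is to set up a fixed-point iteration for the Duhamel formulation of \eqref{1.1} in a scale-invariant parabolic Morrey space adapted to the exponents $\mu$ and $\lambda$. Writing $S(t)=\e^{t\Delta}$ for the Dirichlet heat semigroup on $\Omega$, a solution is a fixed point of the map
\begin{equation*}
  \Phi(u)(t) = S(t)u_0 + \int_0^t S(t-s)\,\bigl(|u(s)|^{p-2}u(s)\bigr)\d s.
\end{equation*}
The natural space $X$ in which to run the contraction is the parabolic Morrey space $L^{p,\mu}(\Omega\times]0,\infty[)$ (equivalently, a Morrey-type bound on $u$ in the parabolic metric with the scaling $u_\rho(x,t)=\rho^{2/(p-2)}u(\rho x,\rho^2 t)$ left invariant), supplemented by the requirement that $t^{1/(p-2)}\|u(t)\|_{L^{2,\lambda}}$ and similar weighted quantities stay bounded so that one recovers a genuine \emph{smooth} solution for $t>0$. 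First I would record the two analytic inputs that make the iteration close: (a) the \textbf{linear estimate} $\|S(\cdot)u_0\|_X\le C\|u_0\|_{L^{2,\lambda}(\Omega)}$, which says the heat semigroup maps the initial Morrey space boundedly into the parabolic Morrey space at the critical scaling — this is where integration by parts on the heat kernel, rather than pseudodifferential calculus, is used, exactly in the spirit of Koch–Tataru; and (b) the \textbf{nonlinear (bilinear/$p$-linear) estimate} bounding the Duhamel term: $\bigl\|\int_0^\cdot S(\cdot-s)\bigl(|u|^{p-2}u-|v|^{p-2}v\bigr)(s)\d s\bigr\|_X\le C\bigl(\|u\|_X^{p-2}+\|v\|_X^{p-2}\bigr)\|u-v\|_X$, which follows because the exponents were chosen precisely so that $|u|^{p-2}u\in L^{p/(p-1),\,\mu'}$ for the appropriate dual Morrey exponent and the heat flow gains back exactly the missing derivatives in the Morrey scale. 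A Hölder inequality in Morrey spaces reduces the $p$-linear estimate to the case of a product of $p-1$ copies of $u$ (or $v$) times the difference, and the parabolic Morrey boundedness of the Riesz-type potential associated with $\int_0^t S(t-s)\cdot\d s$ does the rest.

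Given (a) and (b), the proof of Theorem~\ref{thm1.1} is the standard Banach fixed-point argument: choose $R>0$ and work in the ball $B_R\subset X$; if $\|u_0\|_{L^{2,\lambda}}<\varepsilon_0$ with $\varepsilon_0$ and $R$ chosen so that $C\varepsilon_0+C R^{p-1}\le R$ and $C R^{p-2}<1$, then $\Phi$ maps $B_R$ into itself and is a contraction there, giving a unique fixed point $u\in B_R$. Since the smallness is measured in the \emph{global} (scale-invariant) norm and nothing degenerates as $t\to\infty$, the solution is automatically global — there is no finite maximal existence time to rule out. Uniqueness in the full class of smooth solutions (not just in $B_R$) follows by a Gronwall/continuity argument: any smooth solution has small parabolic-Morrey norm on a short initial interval by absolute continuity of the Morrey norm (using $u_0$ effectively in $L^{2,\lambda}_0$ after the a~priori bound, or a direct bootstrap), and then the contraction estimate forces it to coincide with the fixed point on $[0,\delta]$, and one propagates forward. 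Finally, interior parabolic regularity — $\varepsilon$-regularity from the smallness of the scaled Morrey norm, followed by Schauder bootstrapping through $g(v)=|v|^{p-2}v$ — upgrades the fixed point to a genuine smooth solution $u\in C^1(\bar\Omega\times]0,\infty[)$ with $u_t\in L^2_{loc}$, and the weighted estimates built into $X$ guarantee the initial data is attained in the trace sense.

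The main obstacle I expect is establishing the two Morrey estimates (a) and (b) cleanly at the critical scaling, in particular getting the \textbf{boundary} behaviour right: on $\R^n$ the heat kernel is explicit and the Morrey bounds reduce to rescaling a single Gaussian convolution, but on a smoothly bounded $\Omega$ with Dirichlet conditions one must control the Dirichlet heat kernel near $\partial\Omega$ — typically via Gaussian upper bounds with a harmless multiplicative correction, or by localizing and straightening the boundary. The second delicate point is that the nonlinear estimate is at the \emph{endpoint}: the loss of $p-2$ derivatives in the source term $|u|^{p-2}u$ is exactly compensated, with no room to spare, so one must verify the parabolic Morrey mapping property of the operator $f\mapsto\int_0^t S(t-s)f(s)\d s$ at the sharp exponent pair $\bigl(\tfrac{p}{p-1},\mu'\bigr)\to\bigl(p,\mu\bigr)$ — this is the Morrey-space analogue of a maximal-regularity / Stein–Weiss fractional-integration bound, and it, together with the $t\to\infty$ uniformity, is the technical heart of the argument. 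Everything else (the fixed-point packaging, uniqueness, and the regularity bootstrap) is then routine.
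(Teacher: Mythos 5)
Your proposal follows essentially the same route as the paper: a linear Morrey estimate showing the Dirichlet heat semigroup maps $L^{2,\lambda}(\Omega)$ boundedly into the scale-invariant parabolic space $L^{p,\mu}(\Omega\times]0,\infty[)$ (the paper's Lemma \ref{lem2.1}, proved via Gaussian upper bounds on $\Gamma$, fractional maximal functions, and Caccioppoli-type integration by parts), combined with a Banach fixed-point argument for the Duhamel map in a small ball of $L^{p,\mu}$ using the endpoint bound for $f\mapsto\int_0^t S_{t-s}f\,ds$ from $L^{p/(p-1),\mu}$ to $L^{p,\mu}$, and a final $\varepsilon$-regularity/Schauder bootstrap (the paper's Lemma \ref{lem2.2}, citing the earlier Blatt--Struwe results for the nonlinear estimate and the smoothness upgrade). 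The two technical obstacles you single out — the boundary behaviour of the Dirichlet heat kernel and the sharpness of the endpoint fractional-integration bound — are exactly where the paper's effort is concentrated, so the plan is sound and matches the published argument.
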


The smallness condition can be somewhat relaxed.

\begin{theorem}\label{thm1.2}
Let $u_0\in L^{2,\lambda}(\Omega)$ and suppose that there exists a number $R>0$ such that
\begin{equation*}
  \sup_{x_0\in R^n,\;0<r<R}r^{\lambda-n}\int_{B_r(x_0)\cap\Omega}|u_0|^2 dx
  \le\varepsilon_0^2,
\end{equation*}
where $\varepsilon_0>0$ is as determined in Theorem \ref{thm1.1}.
Then there exists a unique smooth solution $u$ to \eqref{1.1} on an interval $]0,T_0[$,
where $T_0/R^2=C(\varepsilon_0/\|u_0\|_{L^{2,\lambda}})>0$.

In particular, for any $u_0\in L_0^{2,\lambda}(\Omega)$ there exists a unique smooth solution 
$u$ to \eqref{1.1} on some interval $]0,T[$, where $T=T(u_0)>0$.
\end{theorem}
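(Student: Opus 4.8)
The plan is to rerun the contraction argument used to prove Theorem~\ref{thm1.1}, but on a finite time interval $\Omega\times]0,T[$, exploiting that restricting to a finite interval truncates the parabolic cylinders entering the solution norm at scale $\sqrt T$ while leaving the nonlinear estimate untouched. Recall from the proof of Theorem~\ref{thm1.1} the parabolic Morrey solution space $X$ and the Duhamel map $N(u)(t)=\int_0^t\e^{(t-s)\Delta}g(u(s))\,ds$, with $g(v)=|v|^{p-2}v$ and $\Delta$ the Dirichlet Laplacian on $\Omega$; for $T>0$ let $X_T$ be the restriction of $X$ to $\Omega\times]0,T[$. Since the cylinders defining $X_T$ form a subfamily of those defining $X$, the nonlinear estimate $\|N(u)-N(w)\|_{X_T}\le C\bigl(\|u\|_{X_T}+\|w\|_{X_T}\bigr)^{p-2}\|u-w\|_{X_T}$ holds with the same constant $C=C(n,p)$ for every $T>0$, so that the contraction threshold is $T$-independent. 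The only term that feels $T$ is the free evolution $\e^{t\Delta}u_0$: splitting the heat kernel at spatial scale $L\sqrt T$ gives, for each $L\ge1$, a localized linear estimate
\[
 \|\e^{t\Delta}u_0\|_{X_T}\le C\sup_{x_0\in\R^n,\;0<\rho\le L\sqrt T}\Bigl(\rho^{\lambda-n}\int_{B_\rho(x_0)\cap\Omega}|u_0|^2\,dx\Bigr)^{1/2}+C\,\psi(L)\,\|u_0\|_{L^{2,\lambda}},
\]
where $\psi(L)\to0$ superexponentially as $L\to\infty$: for $t\le T$ the free solution is, up to the far Gaussian tail, governed by the values of $u_0$ within distance comparable to $\sqrt T$, and only scales $\le\sqrt T$ enter $X_T$.

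Writing $M=\|u_0\|_{L^{2,\lambda}}$, I would now first choose $L=L(M/\varepsilon_0)\ge1$ so large that $C\psi(L)M\le\varepsilon_0$, and then set $T:=R^2/L^2$. Then $L\sqrt T\le R$, so by hypothesis the first term in the linear estimate is also $\le C\varepsilon_0$, whence $\|\e^{t\Delta}u_0\|_{X_T}\le(C+1)\varepsilon_0$. Provided $\varepsilon_0=\varepsilon_0(n,p)$ was fixed small enough in Theorem~\ref{thm1.1} --- which we may assume --- this lies below the contraction threshold, and the contraction mapping principle produces a unique fixed point $u\in X_T$. By construction $T=R^2\bigl(L(M/\varepsilon_0)\bigr)^{-2}=R^2\,C(\varepsilon_0/M)$, which is the asserted form of $T_0$.

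It remains to pass from the mild solution $u\in X_T$ to a smooth one and to settle uniqueness. Since membership in $X_T$ in particular yields $\sup_{0<t<T}t^{1/(p-2)}\|u(t)\|_{L^\infty(\Omega)}<\infty$, interior parabolic (Schauder) regularity and bootstrapping make $u$ a smooth solution of \eqref{1.1} on $\Omega\times]0,T[$ attaining $u_0$ as a trace, just as in the proof of Theorem~\ref{thm1.1}. Uniqueness among all smooth solutions follows from the contraction property together with the standard fact that any smooth solution with the same data lies in the contraction ball of $X_{T'}$ for some $0<T'\le T$, hence coincides with $u$ on $]0,T'[$, after which it propagates on the interval where both solutions are classical by the local Lipschitz continuity of $g$. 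Finally, if $u_0\in L_0^{2,\lambda}(\Omega)$ then by definition the supremum in the hypothesis tends to $0$ with the scale, so it is $\le\varepsilon_0^2$ for some $R>0$ and the first part applies; this gives the concluding statement. The step I expect to be the real obstacle is the localized linear estimate above --- making precise that on $\Omega\times]0,T[$ the $X_T$-norm of the free evolution only sees the data at scales comparable to $\sqrt T$, up to a superexponentially small tail --- together with the (easier) check that the constant in the nonlinear estimate does not degenerate as $T$ shrinks; the rest is a routine reprise of the proof of Theorem~\ref{thm1.1}.
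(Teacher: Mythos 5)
Your proposal follows essentially the same route as the paper: rerun the fixed-point argument of Theorem~\ref{thm1.1} on $\Omega\times]0,T_0[$, observe that the constant in the Duhamel estimate for the nonlinearity does not degenerate as $T_0$ shrinks, and reduce everything to a localized parabolic Morrey bound for the free evolution $S_\Omega u_0$, obtained by splitting the heat kernel at a spatial scale comparable to $\sqrt{T_0}$ so that the near part is controlled by the local smallness hypothesis and the far part by $\|u_0\|_{L^{2,\lambda}}$ times a factor decaying in the splitting parameter. This is precisely the combination of Lemma~\ref{lem2.1}(ii) and Lemma~\ref{lem2.2}(ii).

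The difficulty is that the one step you defer is where essentially all of the work lies, and your heuristic for it breaks down at a concrete point. The dyadic splitting of the kernel does yield the pointwise bound $|v(x,t)|\le C t^{-\lambda/4}\varepsilon_0$ for $t<T$ (with a tail factor that is only \emph{polynomial} in the splitting parameter for the kernel bound $G\le C(|x-y|+\sqrt t)^{-n}$ the paper uses, not superexponential --- this only changes the formula for $T_0$), and integrating this gives the $L^{p,\mu}$ bound on cylinders $Q_r(x_0,t_0)$ with $2r^2<t_0$. But the solution norm also sees cylinders with $r^2\sim t_0$ that reach back to $t=0$, where the pointwise bound is useless: since $p\lambda/4=p/(p-2)>1$, the function $t^{-p\lambda/4}$ is not integrable at $t=0$, so ``the free solution only sees data at scale $\sqrt T$'' cannot by itself produce the required estimate there. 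For these cylinders the paper runs a separate Caccioppoli-type energy inequality, an iteration over dyadic annuli $r_i=2^i r_0$ (the quantity $\Psi(r_i)$ in the proof of Lemma~\ref{lem2.1}), and a further testing of the equation with $v|v|^{p-2}\psi^2\tau$, $\tau(t)=\min\{t,t_0-t\}$, to convert the gradient and $L^2$ control into the $L^p$ bound near $t=0$. (Your remark that ``only scales $\le\sqrt T$ enter $X_T$'' is also not literally true --- arbitrarily fat cylinders truncated in time still enter, though these are disposed of by a routine covering.) So the architecture is right and matches the paper, but as written the proposal is an accurate outline of the strategy rather than a proof: its central localized linear estimate is asserted, and the part of it that genuinely requires an idea (the cylinders anchored at $t=0$) is not addressed.
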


It is well-known that for smooth initial data $u_0\in C^1(\bar\Omega)$ there exists a smooth 
solution $u$ to the Cauchy problem \eqref{1.1} on some time interval $[0,T[$, $T>0$. 
By the uniqueness of the solution to \eqref{1.1} constructed in Theorem \ref{thm1.1}
or \ref{thm1.2}, the latter solution coincides with $u$ and hence is smooth up to $t=0$ 
if $u_0\in C^1(\bar\Omega)$.

\section{Proof of Theorem \ref{thm1.1}}
Let $n\ge 3$ and let
$$G(x,t)=(4\pi t)^{-n/2}e^{-\frac{|x|^2}{4t}},\ x\in\R^n,t>0,$$
be the fundamental solution to the heat equation on $\R^n$ with singularity at $(0,0)$.
Given a domain $\Omega\subset\R^n$ also let $\Gamma=\Gamma(x,y,t)=\Gamma(y,x,t)$ be the 
corresponding fundamental solution to the heat equation on $\Omega$ with homogeneous 
Dirichlet boundary data $\Gamma(x,y,t)=0$ for $x\in\partial\Omega$. 
Note that by the maximum principle for any $x,y\in\Omega$, any $t>0$ 
there holds $0<\Gamma(x,y,t)\le G(x-y,t)$.

For $x\in\Omega$, $r>0$ we let 
$$\Omega_r(x)=B_r(x)\cap\Omega;$$
similarly, for $x\in\Omega$, $r,t>0$ we define 
$$Q_r(x,t)=P_r(x,t)\cap\Omega\times ]0,\infty[.$$
We sometimes write $z=(x,t)$ for a generic point in space-time.
The letter $C$ will denote a generic constant, sometimes numbered for clarity. 

For $f \in L^1(\Omega)$ set
\begin{equation*}
 (S_{\Omega}f)(x,t) := \int_{\Omega}\Gamma(x,y,t)f(y)\, dy,\ t>0,
\end{equation*}
so that $v=S_{\Omega}f$ solves the equation
\begin{equation}\label{2.1}
  v_t -\Delta v = 0 \text{ on } \Omega\times [0,\infty[
\end{equation}
with boundary data $v(x,t)=0$ for $x\in\partial\Omega$ and initial data 
$v\big|_{t=0}=f$ on $\Omega$.

Similar to \cite{Blatt-Struwe-2013}, Proposition 4.3, by adapting the methods of 
Adams \cite{Adams-75} we can show that $S_{\Omega}$ is well-behaved on Morrey spaces. 
Recall that $\mu=\frac{2p}{p-2}$ with $2<\mu<n$, and $\lambda=\mu-2=\frac{4}{p-2}>0$.

\begin{lemma} \label{lem2.1}
i) For any $p>2^*=\frac{2n}{n-2}$ the map
\begin{equation*}
  S_{\Omega}\colon L^{2,\lambda}(\Omega)\ni f \mapsto (v,\nabla v)
  \in L^{p,\mu}\times L^{2,\mu}(\Omega\times [0,\infty[)
\end{equation*}
is well-defined and bounded. Moreover, we have the bounds
\begin{equation}\label{2.2}
  \|v(t)\|^2_{L^{\infty}} 
  \le Ct^{-\lambda/2}\|f\|^2_{L^{2,\lambda}},\
  \|v(t)\|^2_{L^{2,\lambda}}\le C\|f\|^2_{L^{2,\lambda}},\ t>0.
\end{equation}
ii) Let $f\in L^{2,\lambda}(\Omega)$ and suppose that for a given $\varepsilon_0>0$
there exists a number $R>0$ such that
\begin{equation*}
  \sup_{x_0\in\Omega,\;0<r<R}\Big(r^{\lambda-n}\int_{\Omega_r(x_0)}|f|^2 dx\Big)^{1/2}
  \le\varepsilon_0.
\end{equation*}
Then with a constant $C>0$ for $v=S_{\Omega}f$ there holds the estimate
\begin{equation*}
   \sup_{x_0\in\Omega,\;0<r^2\le t_0\le T_0}\Big(r^{\mu-n-2}\int_{Q_r(x_0,t_0)}|v|^p dz\Big)^{1/p}
   \le C\varepsilon_0,
\end{equation*}
where $T_0/R^2=C(\varepsilon_0/\|f\|_{L^{2,\lambda}(\Omega)})>0$.
\end{lemma}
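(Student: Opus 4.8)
The plan is to estimate the two pieces of $S_\Omega$ --- the function $v$ and its gradient $\nabla v$ --- separately, and to do so by reducing everything, via the pointwise bound $0<\Gamma(x,y,t)\le G(x-y,t)$ for the Dirichlet kernel, to computations with the Gaussian $G$ on all of $\R^n$; one then extends $f$ by zero outside $\Omega$. For part i), I would first prove the pointwise-in-time bounds \eqref{2.2}. For the $L^\infty$ bound, write $v(x,t)=\int_{\R^n}G(x-y,t)f(y)\,dy$ and split the $y$-integral into the dyadic annuli $\{2^{-j}\sqrt t\le |x-y|<2^{-j+1}\sqrt t\}$ for $j\le 0$ together with the ball $|x-y|<\sqrt t$; on each annulus apply Cauchy--Schwarz, using that $\|f\|^2_{L^2(B_r(x))}\le C r^{n-\lambda}\|f\|^2_{L^{2,\lambda}}$ from the definition \eqref{1.2}, and that $G(y,t)$ together with its $L^2$ mass over an annulus of radius $\sim 2^j\sqrt t$ decays like a Gaussian in $2^{2j}$; summing the resulting geometric-type series in $j$ yields $|v(x,t)|^2\le C t^{-\lambda/2}\|f\|^2_{L^{2,\lambda}}$. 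The bound $\|v(t)\|^2_{L^{2,\lambda}}\le C\|f\|^2_{L^{2,\lambda}}$ is obtained the same way, now also integrating $|v(x,t)|^2$ over a ball $B_r(x_0)$ and keeping track of the factor $r^{\lambda-n}$; this is exactly the Morrey-space mapping property of the heat semigroup established by Adams' method, as in \cite{Blatt-Struwe-2013}, Proposition 4.3, and I would cite that for the parabolic $L^{p,\mu}$ and $L^{2,\mu}$ bounds on $(v,\nabla v)$ rather than redo the full argument.

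For part ii), the key observation is that the hypothesis only controls $f$ at scales $r<R$, so the conclusion can only hold at correspondingly small parabolic scales, and this is what the time restriction $r^2\le t_0\le T_0$ and the choice of $T_0$ encode. I would fix $x_0\in\Omega$ and $0<r^2\le t_0\le T_0$ and estimate $\int_{Q_r(x_0,t_0)}|v|^p\,dz$ by first using H\"older in the time slice together with the already-proven $L^\infty$--$L^{2,\lambda}$ bound: on the cylinder $P_r(x_0,t_0)$ one has $\|v(t)\|_{L^\infty}\le C t^{-\lambda/4}\|f\|_{L^{2,\lambda}}$, but this global bound is too crude near the center, so instead I split $f=f_1+f_2$ with $f_1=f\chi_{B_{2r_0}(x_0)}$ for a suitable $r_0$ comparable to $\sqrt{T_0}$ (hence $<R$), and $f_2$ the far part. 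For $v_1=S_\Omega f_1$ one uses the smallness hypothesis: $f_1$ has small $L^{2,\lambda}$ norm over the relevant scales, namely $\|f_1\|_{L^{2,\lambda}}\le C\varepsilon_0$ once $r_0<R$, so \eqref{2.2} applied to $f_1$ gives $\|v_1(t)\|_{L^\infty}^2\le C t^{-\lambda/2}\varepsilon_0^2$ and integrating $|v_1|^p$ over $Q_r(x_0,t_0)$ and multiplying by $r^{\mu-n-2}$ yields a bound $\le C\varepsilon_0^p$, since $\mu=\frac{2p}{p-2}$ makes the powers of $r$ and $t$ balance. For the far part $v_2$, inside the smaller cylinder $P_r(x_0,t_0)$ with $t_0\le T_0$ the kernel $G(x-y,t)$ with $x$ near $x_0$, $|y-x_0|\ge 2r_0$, and $t\le T_0$ is uniformly bounded by a rapidly decaying Gaussian (because the parabolic distance from the support of $f_2$ to the cylinder is $\gtrsim r_0\gtrsim\sqrt{T_0}$), so $\|v_2\|_{L^\infty(P_r(x_0,t_0))}\le C\,T_0^{\text{(power)}}\|f\|_{L^{2,\lambda}}$, and choosing $T_0/R^2$ small as a function of $\varepsilon_0/\|f\|_{L^{2,\lambda}}$ makes this $\le C\varepsilon_0$ as well; combining the two contributions gives the claimed estimate.

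The main obstacle is the bookkeeping in part ii): one must choose the splitting scale $r_0$, the smallness of $T_0/R^2$, and the cutoffs so that simultaneously (a) $r_0<R$ so the hypothesis applies to $f_1$, (b) the near part's contribution is genuinely $O(\varepsilon_0)$ with the correct homogeneity in $r$ and $t$ built in through $\mu$ and $\lambda$, and (c) the far part is small because of the Gaussian decay and the smallness of $T_0$. The homogeneity check --- verifying that $r^{\mu-n-2}\int_{Q_r}|v_1|^p\,dz$ really is scale-invariant given $\mu-2=\lambda$ and $p>2^*$, so that the $\varepsilon_0$ enters to the first power --- is the computational heart, and it is exactly where the specific values $\mu=\frac{2p}{p-2}$, $\lambda=\frac{4}{p-2}$ are forced. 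Everything else is a routine adaptation of Adams' Morrey-space estimates for the heat kernel, already carried out in \cite{Blatt-Struwe-2013}, to which I would defer for the parabolic-norm statements in part i).
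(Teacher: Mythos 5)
Your overall strategy (Adams-style dyadic decomposition of the kernel, near/far splittings, homogeneity bookkeeping via $\mu=p\lambda/2$) matches the paper's for the \emph{pointwise} bound $\|v(t)\|_{L^\infty}^2\le Ct^{-\lambda/2}\|f\|^2_{L^{2,\lambda}}$ and for the $L^p$ estimate on cylinders with $2r^2<t_0$. But there is a genuine gap at the hardest point of the lemma: the $L^p$ bound on cylinders $Q_r(x_0,t_0)$ with $t_0\le 2r^2$, i.e.\ cylinders reaching down to $t=0$. Your plan is to integrate the sup bound $\|v_1(t)\|_{L^\infty}\le Ct^{-\lambda/4}\varepsilon_0$ over the cylinder, but $\int_0^{t_0}t^{-p\lambda/4}\,dt=\int_0^{t_0}t^{-\mu/2}\,dt$ diverges, since $\mu=\tfrac{2p}{p-2}>2$. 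The powers of $r$ and $t$ do \emph{not} balance there; the $L^\infty$-in-$x$ bound is too lossy near $t=0$ (it replaces the true profile $(|x-x_0|+\sqrt t)^{-\lambda/2}$ by its supremum over the ball). The paper handles exactly this regime by a completely different mechanism: Caccioppoli-type testing of the heat equation with $v\psi^2$ (giving the Morrey $L^2$ and gradient bounds via a dyadic iteration over scales $r_i=2^ir$) and then with $v|v|^{p-2}\psi^2\tau$, where the weight $\tau=\min\{t,t_0-t\}$ kills the singularity at $t=0$. Some such energy argument, or at least a pointwise estimate retaining the $x$-dependence, is indispensable; your proposal omits it.

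Two further points. First, you assert that $\|v(t)\|_{L^{2,\lambda}}\le C\|f\|_{L^{2,\lambda}}$ is ``obtained the same way'' and defer the $L^{p,\mu}$ and $L^{2,\mu}(\nabla v)$ bounds to \cite{Blatt-Struwe-2013}, Proposition 4.3. The cited result concerns stronger data ($H^{1,\mu}\cap L^{p,\mu}$); the whole content of Lemma \ref{lem2.1}(i) is that these conclusions survive with data merely in $L^{2,\lambda}$, so the citation does not cover it, and the same divergence as above shows that the crude $L^\infty$ route cannot give the $L^{2,\lambda}$ propagation for $r^2\gtrsim t$ nor the gradient bound near $t=0$ (differentiating the kernel costs $t^{-1/2}$ and makes matters worse). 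Second, in part ii) your far-field splitting at scale $r_0\sim\sqrt{T_0}$ does not make the far contribution $O(\varepsilon_0)$: for $t\sim T_0\sim r_0^2$ the Gaussian factor is $O(1)$ and you only recover $C\|f\|_{L^{2,\lambda}}^p$, not $C\varepsilon_0^p$. One must instead exploit the hypothesis on \emph{all} scales up to $R$ and use the full norm only for scales $\ge R$, which is where the choice $2^{-k_0\lambda/2}\|f\|_{L^{2,\lambda}}\le\varepsilon_0$, $T\sim 2^{-2k_0}R^2$ (equivalently $T_0/R^2\lesssim(\varepsilon_0/\|f\|_{L^{2,\lambda}})^{4/\lambda}$) comes from; this part of your plan is repairable, but as stated it does not close.
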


\begin{proof}
i) Let $f\in L^{2,\lambda}(\R^n)$ and set $v=S_{\Omega}f$ as above. 
Recall the definition of the fractional maximal functions
\begin{equation*}
  M_\alpha f(x):= \sup_{r>0}M_{\alpha,r}f(x),\
  M_{\alpha,r}f(x):=r^{\alpha-n} \int_{\Omega_r (x)} |f(y)|\, dy,\ \alpha >0.
\end{equation*}
Note that H\"older's inequality gives the uniform bound
\begin{equation}\label{2.3}
  \big(M_{\lambda/2} f\big)^2 \leq M_{\lambda} (|f|^2) \leq \|f\|^2_{L^{2,\lambda}}.
\end{equation}

Following the scheme outlined by Adams \cite{Adams-75}, proof of Proposition 3.1, we
first derive pointwise estimates for $v$ 
and bounds on parabolic cylinders $P_r(x_0,t_0)$ with radius $r$ satisfying $0<2r^2<t_0$.
Using the well known estimate
\begin{equation*}
  G(x-y,t)\le C(|x-y|+\sqrt{t})^{-n} 
\end{equation*}
for the heat kernel and recalling that $\Gamma(x,y,t)\le G(x-y,t)$, for any $t>0$ we can bound
\begin{align*}
  |v(x,t)| & \le C\int_{\Omega}(|x-y|+\sqrt{t})^{-n}|f(y)|\,dy\\
  &\le C\int_{\Omega_{\sqrt{t}}(x)}(|x-y|+\sqrt{t})^{-n}|f(y)|\,dy\\
  &\qquad+C\sum_{k=1}^\infty\int_{\Omega_{2^k\sqrt{t}}(x)\setminus \Omega_{2^{k-1}\sqrt{t}}(x)}
  (|x-y|+\sqrt{t})^{-n}|f(y)|\,dy\\
  &\le C \sum_{k=0}^\infty (2^k\sqrt{t})^{-n} (2^k\sqrt{t})^{n-\lambda/2}
  M_{\lambda/2,2^k\sqrt{t}}f(x)\le C t^{-\lambda/4}M_{\lambda/2}f(x).
\end{align*}
Hence by \eqref{2.3} with a uniform constant $C>0$ for any $t>0$ there holds
\begin{equation*}
  \|v(t)\|^2_{L^{\infty}}
  \le Ct^{-\lambda/2}\|M_{\lambda/2} f\|^2_{L^{\infty}}  
  \le Ct^{-\lambda/2}\|f\|^2_{L^{2,\lambda}},
\end{equation*}
as claimed in \eqref{2.2}. Moreover, for any $x_0\in\R^n$, any $t_0>0$ and any $0<r<\sqrt{t_0/2}$ we obtain 
the bounds
\begin{equation}\label{2.4}
  \|v(t_0)\|^2_{L^2(\Omega_r(x_0))}\le Cr^nt_0^{-\lambda/2}\|f\|^2_{L^{2,\lambda}}
  \le Cr^{n-\lambda}\|f\|^2_{L^{2,\lambda}},
\end{equation}
and similarly
\begin{equation}\label{2.5}
  \|v\|^p_{L^{p}(Q_r(x_0,t_0))}\le Cr^{n+2}t_0^{-p\lambda/4}\|f\|^p_{L^{2,\lambda}}
  \le Cr^{n+2-\mu}\|f\|^p_{L^{2,\lambda}},
\end{equation}
where we also used that $\mu=2p\lambda/4$.

In order to derive \eqref{2.5} also for radii $r\ge\sqrt{t_0/2}$ we need to 
argue slightly differently. We may assume that $x_0=0$. Moreover, after enlarging $t_0$, 
if necessary, we may assume that $t_0=2r^2$. 
Let $\psi=\psi_0=\psi_0(x)$ be a smooth cut-off function satisfying 
$\chi_{B_{r}(0)}\le\psi\le\chi_{B_{2r}(0)}$ and with $|\nabla\psi|^2\le 4r^{-2}$.
Set $r=:r_0$ and let $r_i=2^ir_0$, $\psi_i(x)=\psi(2^{-i}x)$, $i\in\N$. 
For ease of notation in the following estimates we drop the index $i$. 

Upon multipying \eqref{2.1} with $v\psi^2$ we find the equation
\begin{equation*}
 \begin{split}
  \frac12\frac{d}{dt}&(|v|^2\psi^2)-div(v\nabla v\psi^2)+|\nabla v|^2\psi^2\\
  &=-2v\nabla v\psi\nabla\psi\le\frac12|\nabla v|^2\psi^2+ 2|v|^2|\nabla\psi|^2.
  \end{split}
\end{equation*}
Integrating over $\Omega\times ]0,t_1[$ and using the bound $|\nabla\psi|^2\le 4r^{-2}$,
for any $0<t_1<t_0$ we obtain 
\begin{equation}\label{2.6}
 \begin{split}
  \int_{\Omega_{2r}(0)}&|v(t_1)|^2\psi^2dx
   +\int_{\Omega_{2r}(0)\times ]0,t_1[}|\nabla v|^2\psi^2dxdt \\
  &\le \int_{\Omega_{2r}(0)}|f|^2\psi^2dx
   +16r^{-2}\int_{\Omega_{2r}(0)\times ]0,t_1[}|v|^2dxdt.
  \end{split}
\end{equation}
For $r=r_i$, $i\in\N_0$, set 
\begin{equation*}
     \Psi(r):=\sup_{x_0\in\Omega,0<t<t_0}r^{\lambda-n}\int_{\Omega_{r}(x_0)}|v(t)|^2dx.
\end{equation*}
Recalling that $\lambda=\mu-2$, then from the previous inequality \eqref{2.6}
with the uniform constants $C_1=2^{n-\lambda}$, $C_2=32C_1$ we obtain
\begin{equation*}
 \begin{split}
     \Psi(r_i) &\le r_i^{\lambda-n}\Big(\int_{\Omega_{2r_i}(0)}|f|^2dx
     +16t_0r_i^{-2}\sup_{0<t<t_0}\int_{\Omega_{2r_i}(0)}|v(t)|^2dx\Big)\\
     &\le C_1\|f\|^2_{L^{2,\lambda}}+C_22^{-2i}\Psi(r_{i+1}).
  \end{split}
\end{equation*}
By iteration, for any $k_0\in\N$ there results
\begin{equation*}
 \begin{split}
     \Psi(r_0) &\le C_1\|f\|^2_{L^{2,\lambda}}+C_2\Psi(r_1)
     \le C_1(1+C_2)\|f\|^2_{L^{2,\lambda}}+C_2^22^{-2}\Psi(r_2)\le \dots\\
     &\le C_1\sum_{k=0}^{k_0}C_2^k2^{(1-k)k}\|f\|^2_{L^{2,\lambda}}
     +C_2^{k_0+1}2^{-k_0(k_0+1)}\Psi(r_{k_0+1}).
  \end{split}
\end{equation*}
Passing to the limit $k_0\to\infty$, we obtain that $\Psi(r_1)\le C\|f\|^2_{L^{2,\lambda}}$. 
Inserting this information into \eqref{2.6}, where we again set $r=r_0$, then we find
\begin{equation}\label{2.7}
     \Psi(r) +\sup_{x_0\in\Omega}r^{\mu-2-n}\int_{\Omega_r(x_0)\times]0,t_0[}|\nabla v|^2dxdt
     \le C\|f\|^2_{L^{2,\lambda}}.
\end{equation}
In particular, together with \eqref{2.4} we have now shown the bound
\begin{equation*}
\|v(t)\|^2_{L^{2,\lambda}}\le C\|f\|^2_{L^{2,\lambda}} \hbox{  for all }t>0,
\end{equation*}
and thus have verified \eqref{2.2} completely.

To complete the proof of \eqref{2.5} for $r=r_0=\sqrt{t_0/2}$, let $\psi=\psi_0$ 
as above and let $\tau(t)=\min\{t,t_0-t\}$. 
Multiplying \eqref{1.1} with the function $v|v|^{p-2}\psi^2\tau$ then we obtain 
\begin{equation*}
 \begin{split}
  \frac1p\frac{d}{dt}&(|v|^p\psi^2\tau)-\frac1p\frac{d\tau}{dt}|v|^p\psi^2
  -div(|v|^{p-2}v\nabla v\psi^2\tau)+(p-1)|\nabla v|^2|v|^{p-2}\psi^2\tau\\
  &=-2|v|^{p-2}v\nabla v\psi\nabla\psi\tau
  \ge -|\nabla v|^2|v|^{p-2}\psi^2\tau-|v|^p|\nabla\psi|^2\tau.
  \end{split}
\end{equation*}
Integrating over $\Omega\times ]0,t_0[$ and using that $\frac{d\tau}{dt}=1$ for $0<t<t_0/2$,
$\frac{d\tau}{dt}=-1$ for $t_0/2<t<t_0$, as well as the fact that the region 
$\Omega_{2r}(0)\times ]t_0/2,t_0[$ may be covered by a collection of at most $L=L(n)$ cylinders 
$Q_r(x_l,t_0)$, $1\le l\le L$, we find 
\begin{equation*}
 \begin{split}
  \int_{Q_r(x_0,t_0/2)}|v|^pdz\le L\sup_{1\le l\le L}&\int_{Q_{r}(x_l,t_0)}|v|^pdz
  +Cr^{-2}\int_{\Omega_{2r}(0)\times ]0,t_0[}|v|^p\tau\, dxdt\\
  &+C\int_{\Omega_{2r}(0)\times ]0,t_0[}|\nabla v|^2|v|^{p-2}\tau\, dxdt.
  \end{split}
\end{equation*}
But by \eqref{2.2} we have $|v|^{p-2}\tau\le |v|^{p-2}t\le C\|f\|^{p-2}_{L^{2,\lambda}}$,
and from \eqref{2.7} we obtain
\begin{equation*}
 \begin{split}
  r^{-2}&\int_{\Omega_{2r}(0)\times ]0,t_0[}|v|^p\tau\, dxdt
  +\int_{\Omega_{2r}(0)\times ]0,t_0[}|\nabla v|^2|v|^{p-2}\tau\, dxdt\\
  &\le C\|f\|^{p-2}_{L^{2,\lambda}}\Big(r^{n-\lambda}\Psi(2r)
  +\int_{\Omega_{2r}(0)\times ]0,t_0[}|\nabla v|^2dxdt\Big)
  \le Cr^{n-\lambda}\|f\|^p_{L^{2,\lambda}}.
 \end{split}
\end{equation*}
Recalling that for each cylinder $Q_{r}(x_l,t_0)$, $1\le l\le L$, there holds \eqref{2.5},
we then obtain
\begin{equation*}
 \begin{split}
  \int_{Q_r(x_0,t_0/2)}|v|^pdz
  \le L\sup_{1\le l\le L}\int_{Q_{r}(x_l,t_0)}|v|^pdz
  +Cr^{n-\lambda}\|f\|^p_{L^{2,\lambda}} \le Cr^{n-\lambda}\|f\|^p_{L^{2,\lambda}},
  \end{split}
\end{equation*}
and \eqref{2.5} follows since $\lambda=\mu-2$.

Finally, for $t_0\le r^2$ and any $x_0\in\Omega$ equation \eqref{2.6} yields the gradient
bound 
\begin{equation*}
 \begin{split}
  \int_{Q_r(0,t_0)}|\nabla v|^2dz&\le\int_{\Omega_{2r}(0)}|f|^2\psi^2dx
   +16r^{-2}\int_{\Omega_{2r}(0)\times ]0,t_0[}|v|^2dxdt\\
  &\le Cr^{n-\lambda}\big(\|f\|^2_{L^{2,\lambda}}+\Psi(2r)\big)
  \le Cr^{n-\lambda}\|f\|^2_{L^{2,\lambda}}. 
  \end{split}
\end{equation*}
In view of \eqref{2.2} the same bound also holds for $t_0>r^2$ as can be seen by shifting time 
by $t_0-r^2$ and replacing $f$ with the function 
$\tilde{f}(x)=v(x,t_0-r^2)\in L^{2,\lambda}(\Omega)$. 
With $\lambda=\mu-2$ we obtain the bound 
$\|\nabla v\|_{L^{2,\mu}}\le C\|f\|_{L^{2,\lambda}}$,
as desired.

ii) Set $L_0:=\|f\|_{L^{2,\lambda}}$. As before, for any $x\in\Omega$ we have the bound
\begin{align*}
  |v(x,t)|&\le C\sum_{k=0}^\infty (2^k\sqrt{t})^{-\lambda/2}M_{\lambda/2,2^k\sqrt{t}}f(x).
\end{align*}
By assumption for $r=2^k\sqrt{t}\le R$ we can estimate 
\begin{equation*}
   M_{\lambda/2,r}(|f|)(x)\le\big(M_{\lambda,r}(|f|^2)(x)\big)^{1/2}\le\varepsilon_0,
\end{equation*}
whereas for any $r>0$ we have 
\begin{equation*}
 M_{\lambda/2,r}(|f|)(x)\le\big(M_{\lambda,r}(|f|^2)(x)\big)^{1/2}\le\|f\|_{L^{2,\lambda}}=L_0.
\end{equation*}
Let $k_0\in\N$ such that $2^{-k_0\lambda/2}L_0\le\varepsilon_0$.
Then for $0<t<T:=2^{-2k_0}R^2$ we find the uniform estimate
\begin{align*}
  |v(x,t)|& \le Ct^{-\lambda/4}\big(\sum_{k=0}^{k_0}2^{-k\lambda/2}\varepsilon_0
  + \sum_{k=k_0+1}^{\infty}2^{-k\lambda/2}L_0\big)
  \le Ct^{-\lambda/4}\varepsilon_0.
\end{align*}
Proceeding as in part i) of the proof, for any $0<t<T$, any $x_0\in\Omega$, 
and any $0<r<\sqrt{t/2}$ we then obtain the bound
\begin{equation*}
  \|v(t)\|^2_{L^2(\Omega_r(x_0))}\le Cr^nt^{-\lambda/2}\varepsilon_0^2
  \le Cr^{n-\lambda}\varepsilon_0^2;
\end{equation*}
similarly, we find
\begin{equation}\label{2.6a}
  \|v\|^p_{L^{p}(P_r(x_0,t_0))}\le Cr^{n+2}t^{-p\lambda/4}\varepsilon_0^p
  \le Cr^{n+2-\mu}\varepsilon_0^p
\end{equation}
whenever $0<2r^2<t_0<T$.
In order to derive the latter bound also for radii $r>0$ with $t_0/2\le r^2\le t_0\le T$
as in i) we may assume that $x_0=0$ and fix some numbers $0<t_0<T$, $r_0\ge\sqrt{t_0/2}$.
Setting
\begin{equation*}
     \Psi(r):=\sup_{0<t<t_0}r^{\lambda-n}\int_{B_{r}(0)}|v(t)|^2dx,\ r>0,
\end{equation*}
for $r=r_i=2^ir_0$, $i\in\N_0$, from \eqref{2.6} we obtain the bound
\begin{equation*}
 \begin{split}
     \Psi(r_i) &\le r_i^{\lambda-n}\int_{B_{2r_i}(0)}|f|^2dx
     +16C_1t_0r_i^{-2}\Psi(2r_i)\\
     & \le C_1M_{\lambda,r_{i+1}}(|f|^2)(0)+C_22^{-2i}\Psi(r_{i+1})
  \end{split}
\end{equation*}
for any $i\in\N$, with constants $C_1=2^{n-\lambda}$, $C_2=32C_1$ as before.

Suppose that $r_{i_0}\le R$ for some $i_0\in\N$.
Bounding $M_{\lambda,r_i}(|f|^2)(x)\le\varepsilon_0^2$ for $i\le i_0$ 
and $M_{\lambda,r_i}(|f|^2)(x)\le L_0^2$ else, 
by iteration we then obtain
\begin{equation*}
 \begin{split}
     \Psi(r_0) & \le C_1\varepsilon_0^2+C_2\Psi(r_1)
     \le C_1(1+C_2)\varepsilon_0^2+C_2^22^{-2}\Psi(r_2)\\
     &\le C_1(1+C_2+C_2^22^{-2})\varepsilon_0^2++C_2^22^{-2}C_22^{-4}\Psi(r_3)\le \dots\\
     &\le C_1\sum_{i=0}^{i_0-1}C_2^i2^{(1-i)i}\varepsilon_0^2 
     + C_1\sum_{i=i_0}^{k}C_2^i2^{(1-i)i}L_0^2
     +C_2^{k+1}2^{-k(k+1)}\Psi(r_{k+1}).
  \end{split}
\end{equation*}
Thus, if $i_0$ is such that $C_22^{1-i_0}\le(\varepsilon_0/L_0)^2\le 1/2$, that is, if 
$$\sqrt{2t_0}\le 2r_0=2^{1-i_0}r_{i_0}\le 2^{1-i_0}R\le C_2^{-1}(\varepsilon_0/L_0)^2 R,$$
upon passing to the limit $k\to\infty$ we obtain $\Psi(r_0)\le C\varepsilon_0^2$ and the 
analogue of \eqref{2.7} with $\varepsilon_0$ in place of $\|f\|_{L^{2,\lambda}}$.

Recalling the definition $T=2^{-2k_0}R^2$ with $k_0\in\N$ satisfying
$2^{-k_0\lambda/2}L_0\le\varepsilon_0$, we see that these bounds hold true for 
$$0<t_0/2\le r_0^2\le t_0\le
T_0:=R^2\cdot\min\{(\varepsilon_0/L_0)^{4/\lambda},C_2^{-2}(\varepsilon_0/L_0)^4\}.$$
Using \eqref{2.6a}, the remainder of the proof of \eqref{2.5} in part i) now may be copied 
unchanged to yield the claim.
\end{proof}

The assertions of Theorems \ref{thm1.1} and \ref{thm1.2} now are a consequence of the 
following result.
 
\begin{lemma} \label{lem2.2}
i) For any $p>2^*$ there exists a constant $\varepsilon_0>0$ such that for any 
$u_0\in L^{2,\lambda}(\Omega)$ with $\|u_0\|_{L^{2,\lambda}}\le\varepsilon_0$ there exists a 
unique solution $u\in L^{p,\mu}(\Omega\times]0,\infty[)$
to the Cauchy problem \eqref{1.1} 
such that 
\begin{equation}\label{2.8}
  \|u\|_{L^{p,\mu}}\le C\|u_0\|_{L^{2,\lambda}}.
\end{equation}
ii) Let $u_0\in L^{2,\lambda}(\Omega)$ and suppose that there exists a number $R>0$ such that
\begin{equation*}
  \sup_{x_0\in\Omega,\;0<r<R}r^{\lambda-n}\int_{\Omega_r(x_0)}|u_0|^2 dx
  \le\varepsilon_0^2,
\end{equation*}
where $\varepsilon_0>0$ is as determined in i).
Then there exists a unique smooth solution $u$ to \eqref{1.1} on an interval $]0,T_0[$,
where $T_0/R^2=C(\varepsilon_0^{-1}\|u_0\|_{L^{2,\lambda}(\Omega)})>0$.
\end{lemma}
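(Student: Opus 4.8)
The plan is to set up a fixed‑point iteration for the Duhamel formulation of \eqref{1.1}. Writing the equation in integral form, a solution is a fixed point of the map
\begin{equation*}
  \Phi(u)(x,t) = (S_\Omega u_0)(x,t) + \int_0^t\!\!\int_\Omega \Gamma(x,y,t-s)\,|u(y,s)|^{p-2}u(y,s)\,dy\,ds
  =: v + N(u),
\end{equation*}
where $v = S_\Omega u_0$. Lemma \ref{lem2.1} already tells us that $v\in L^{p,\mu}(\Omega\times]0,\infty[)$ with $\|v\|_{L^{p,\mu}}\le C\|u_0\|_{L^{2,\lambda}}$ in case i), and in case ii) that the corresponding localized $L^{p,\mu}$‑norm on $\Omega\times]0,T_0[$ is bounded by $C\varepsilon_0$. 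So the whole burden is to show that the Duhamel term $N$ maps a small ball of $L^{p,\mu}$ into itself and is a contraction there. The natural space for the iteration is the ball $B_\rho = \{u : \|u\|_{L^{p,\mu}}\le\rho\}$ (respectively its restriction to $\Omega\times]0,T_0[$ in case ii)), with $\rho\sim\varepsilon_0$.

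The key estimate to establish is the bilinear/nonlinear bound
\begin{equation}\label{plan-key}
  \|N(u)-N(w)\|_{L^{p,\mu}} \le C\big(\|u\|_{L^{p,\mu}}^{p-2}+\|w\|_{L^{p,\mu}}^{p-2}\big)\,\|u-w\|_{L^{p,\mu}} .
\end{equation}
Granting \eqref{plan-key}, the argument is routine: on $B_\rho$ we get $\|N(u)\|_{L^{p,\mu}}\le C\rho^{p-1}$ and $\|N(u)-N(w)\|\le C\rho^{p-2}\|u-w\|$, so choosing $\varepsilon_0$ (hence $\rho$) small enough that $C\rho^{p-2}\le\tfrac12$ and $\|v\|_{L^{p,\mu}}\le\rho/2$ makes $\Phi$ a $\tfrac12$‑contraction of $B_\rho$ into itself; Banach's fixed point theorem yields the unique fixed point $u$ with $\|u\|_{L^{p,\mu}}\le 2\|v\|_{L^{p,\mu}}\le C\|u_0\|_{L^{2,\lambda}}$, which is \eqref{2.8}. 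Uniqueness in the stated class follows from \eqref{plan-key} applied to two solutions, after an a priori smallness normalization (or a continuity‑in‑$t$ argument starting from $t=0$). For part ii) one runs the same scheme on the time slab $]0,T_0[$ using the localized bounds from Lemma \ref{lem2.1}ii), and the claim about $T_0/R^2$ is inherited directly from that lemma. Smoothness of $u$ on $]0,T_0[$ (or $]0,\infty[$) then comes from bootstrapping: once $u\in L^{p,\mu}$, parabolic Morrey regularity theory (as in \cite{Blatt-Struwe-2013}, \cite{Blatt-Struwe-2014}) upgrades $u$ to be bounded and then Hölder continuous on compact subsets of positive time, after which Schauder theory gives $C^\infty$ in $x$ and the stated $C^1$‑in‑$t$ regularity; the fact that $u$ attains $u_0$ as a trace is built into the Duhamel formula through the $S_\Omega u_0$ term and Lemma \ref{lem2.1}.

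The main obstacle is proving \eqref{plan-key} — i.e.\ the mapping property $N\colon L^{p,\mu}\to L^{p,\mu}$ for the parabolic potential with kernel $\Gamma$ against a nonlinearity $g(u)=|u|^{p-2}u$, which lives in $L^{p',\mu'}$ with $p'=p/(p-1)$ and a correspondingly shifted Morrey weight. The heuristic is that $|u|^{p-2}u\in L^{p/(p-1),\mu/(p-1)}$ and the heat potential $\int_0^t\!\!\int\Gamma(x,y,t-s)\,\cdot\,dy\,ds$ is a parabolic Riesz potential of order $2$, which on parabolic Morrey spaces gains exactly the two orders of scaling needed to land back in $L^{p,\mu}$ — this is the parabolic analogue of the Adams‑type embedding used in Lemma \ref{lem2.1}, and it is where ``simple integration by parts'' rather than pseudodifferential calculus enters. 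Concretely I would dyadically decompose the $(y,s)$‑integral over backward parabolic annuli around $(x,t)$, use $\Gamma(x,y,t-s)\le C(|x-y|+\sqrt{t-s})^{-n}$ together with Hölder's inequality on each annulus to introduce the fractional parabolic maximal function of $g(u)$, sum the resulting geometric series (here the supercriticality $p>2^*$, equivalently $\mu<n$ and $\lambda>0$, guarantees convergence and the correct power of the radius), and finally bound the maximal function by the Morrey norm as in \eqref{2.3}. The difference estimate \eqref{plan-key} is obtained the same way after using the elementary pointwise inequality $\big||u|^{p-2}u-|w|^{p-2}w\big|\le C(|u|^{p-2}+|w|^{p-2})|u-w|$ and one application of Hölder. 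Care is needed near $s=0$ and near $\partial\Omega$, but the bound $0<\Gamma\le G$ reduces the boundary issue to the whole‑space kernel, and the short‑time contribution is controlled by the same dyadic sum.
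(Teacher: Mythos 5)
Your proposal follows essentially the same route as the paper: a Banach fixed-point argument for the Duhamel map $v\mapsto S_\Omega u_0+N(v)$ in a small ball of $L^{p,\mu}(\Omega\times]0,T_0[)$, with the linear part controlled by Lemma \ref{lem2.1} and the nonlinear part by the estimate $\|N(v_1)-N(v_2)\|_{L^{p,\mu}}\le C(\|v_1\|^{p-2}+\|v_2\|^{p-2})\|v_1-v_2\|_{L^{p,\mu}}$, followed by the same bootstrap to smoothness. The only difference is that the paper imports the key mapping property $L^{p/(p-1),\mu}\to L^{p,\mu}$ of the heat potential from Lemma 4.1 and Proposition 4.1 of \cite{Blatt-Struwe-2013}, whereas you sketch its proof via the Adams-type dyadic decomposition; your sketch is the standard argument and the scaling checks out ($\alpha=2$, $q=p/(p-1)$, $\tilde q=q\mu/(\mu-2q)=p$).
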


\begin{proof}
For $u_0\in L^{2,\lambda}(\R^n)$ set $w_0=S_{\Omega}u_0$. For suitable $a>0$ let 
\begin{equation*}
    X:=\{v\in L^{p,\mu}(\Omega\times]0,T_0[);\; \|v\|_{L^{p,\mu}}\le a\},
\end{equation*}
where $T_0>0$ in the case of the assumptions in i) may be chosen arbitrarily large and 
otherwise is as in assertion ii) of Lemma \ref{lem2.1}.

Then $X$ is a closed subset of the Banach space $L^{p,\mu}=L^{p,\mu}(\Omega\times]0,T_0[)$. 
Moreover, for any $v\in X$ we have $|v|^{p-2}v\in L^{p/(p-1),\mu}$. 
By Lemma 4.1 in \cite{Blatt-Struwe-2013}
there exists a unique solution $w=S(v|v|^{p-2})\in L^{p,\mu}$
of the Cauchy problem
\begin{equation*}
  w_t -\Delta w = |v|^{p-2}v\hbox{ on }\Omega\times]0,T_0[,\ w\big|_{t=0}=0,
\end{equation*}
with 
\begin{equation*}
  \|w\|_{L^{p,\mu}}\le C\|v\|^{p-1}_{L^{p,\mu}}\le Ca^{p-1}.
\end{equation*}
For sufficiently small $\varepsilon_0, a>0$ then the map 
\begin{equation*}
   \Phi\colon X\ni v\mapsto w_0+w\in X,
\end{equation*} 
and for $v_{1,2}\in X$ with corresponding $w_i=S(v_i|v_i|^{p-2})$, $i=1,2$, we can estimate
\begin{equation*}
  \begin{split}
  \|\Phi(v_1)&-\Phi(v_2)\|_{L^{p,\mu}} 
  =\|w_1-w_2\|_{L^{p,\mu}}
  \le C\|v_1|v_1|^{p-2}-v_2|v_2|^{p-2}\|_{L^{p/(p-1),\mu}}\; .
  \end{split}
\end{equation*}
The latter can be bounded 
\begin{equation*}
  \begin{split}
  \|v_1|v_1|^{p-2}&-v_2|v_2|^{p-2}\|_{L^{p/(p-1),\mu}}
  \le C\big(\|v_1\|^{p-2}_{L^{p,\mu}}
  +\|v_2\|^{p-2}_{L^{p,\mu}}\big)
  \|v_1-v_2\|_{L^{p,\mu}}.
  \end{split}
\end{equation*}
Thus for sufficiently small $a>0$ we find 
\begin{equation*}
  \begin{split}
  \|\Phi(v_1)&-\Phi(v_2)\|_{L^{p,\mu}}
  \le Ca^{p-2}\|v_1-v_2\|_{L^{p,\mu}}\le\frac12\|v_1-v_2\|_{L^{p,\mu}}.
  \end{split}
\end{equation*}
By Banach's theorem the map $\Phi$ has a unique fixed point $u\in X$, 
and $u$ solves the initial value problem \eqref{1.1} in the
sense of distributions. 
Finally, for sufficiently small $a,\varepsilon_0>0$ we can
invoke Proposition 4.1 in \cite{Blatt-Struwe-2013} to show that $u$, in fact, is a smooth 
global solution of \eqref{1.1}.
\end{proof}

\section{Ill-posedness for ``large'' data}
\subsection{Minimal solutions for non-negative initial data}
In order to obtain a notion of solution of \eqref{1.1} on $\Omega\times ]0,\infty[$ for 
arbitrary nonnegative initial data $u_0\ge 0$, following Baras-Cohen \cite{Baras-Cohen-1987}
for $n\in\N$ we solve the initial value problem
\begin{equation}\label{4.1}
  u_{n,t} -\Delta u_n = f_n(u_n) = \min\{u_n^{p-1},n^{p-1}\}
  \hbox{ on } \Omega\times ]0,\infty[,\ u=0 \hbox{ on } \partial\Omega\times ]0,\infty[,
\end{equation} 
with initial data 
\begin{equation}\label{4.2} 
  u_n(x,0) = u_{0n}(x):=\min\{u_0(x),n\}\ge 0.
\end{equation}
As the right-hand side $f_n(u_n)$ in \eqref{4.1} is uniformly bounded, for any $n\in\N$ there 
exists a unique global solution of \eqref{4.1}, \eqref{4.2}. 
By the maximum principle, positivity of the initial 
data is preserved and $u_n$ is monotonically increasing in $n$. 
Hence, the pointwise limit $u(x,t) := \lim_{n \rightarrow \infty} u_n (x,t)\le\infty$ exists. 
Inspired by Baras and Cohen \cite{Baras-Cohen-1987} we call this limit 
the {\em minimal solution} of problem \eqref{1.1} for the given data $u_0$. Moreover,
similar to their Proposition 2.1 we have $u\le v$ for any $v$ which is 
an {\em integral solution} $v$ of \eqref{1.1} in the sense that 
\begin{equation} \label{4.3}
 v(t) = S_tu_0 + \int_0^t S_{t-s}v^{p-1}(s)ds,
\end{equation}
where for brevity we now write $(S_t)_{t\ge 0}$ for the heat semigroup on $\Omega$, defined by
$$
 S_tw(x)=\int_{\Omega}\Gamma(x,y,t)w(y) dy,
$$
with $\Gamma > 0$ denoting the fundamental solution of the heat equation on $\Omega$.

Indeed, by Duhamel's principle the $u_n$ satisfy the integral equation
\begin{equation}\label{4.4} 
 u_n(t) = S_t u_{0n}+\int_0^t S_{t-s}f_n(u_n(s)) ds.
\end{equation}
Recalling that the sequence $u_n$ is monotonically increasing in $n$, from Beppo-Levi's theorem
on monotone convergence we find that $u$ satisfies \eqref{4.3}. On the other hand, for each $n$ 
and any integral solution $v$ of \eqref{1.1} clearly there holds $u_n\le v$.

With these prerequisites we now show that there are initial data $u_0\in L^{p,\mu}(\Omega)$  
with even $\nabla u_0 \in L^{2,\mu}$ such that the minimal solution $u$ to \eqref{1.1} 
satisfies $u\equiv\infty$ on $\Omega\times]0,\infty[$, that is, undergoes {\em complete 
instantaneous blow-up}. The following arguments are modelled on corresponding results on 
complete instantaneous blow-up by Galaktionov and Vazquez \cite{Galaktionov-Vazquez-1997}
in the case when $\Omega = \R^n$.

\subsection{Complete instantaneous blow-up}
It is well-known that on a bounded domain $\Omega$ equation \eqref{1.1} may be interpreted 
as the negative gradient flow of the energy
\begin{equation*}
 E(u)= E_{\Omega}(u)=\int_{\Omega}\big(\frac12|\nabla u|^2-\frac1p|u|^p\big)dx.
\end{equation*}
As observed by Ball \cite{Ball-1978}, Theorem 3.2, sharpening an earlier result of
Kaplan \cite{Kaplan-1963}, for data $u_0$ with $E(u_0)<0$ the solution
to \eqref{1.1} blows up in finite time. Indeed, Ball \cite{Ball-1978}, Theorem 3.2, observes that
testing the equation \eqref{1.1} with $u$ leads to the differential inequality
\begin{equation*}
 \begin{split}
   \frac12\frac{d}{dt}\|u(t)\|^2_{L^2}& =-\int_{\Omega\times\{t\}}\big(|\nabla u|^2-|u|^p\big)dx
   = - 2E(u(t))+ \frac{p-2}{p}\|u(t)\|^p_{L^p}\\
   & \ge -2E(u_0)+c_0\|u(t)\|^p_{L^2}\ge c_0\|u(t)\|^p_{L^2}
 \end{split}
\end{equation*} 
for some constant $c_0>0$. Hence we find
$$\|u(t)\|_{L^2}\ge \big(\|u_0\|^{(2-p)/2}_{L^2}-c_0(p-2)t\big)^{-2/(p-2)},$$
and $u(t)$ must blow up at the latest at time $T= c_0^{-1}(p-2)^{-1} \|u_0\|^{(2-p)/2}_{L^2}$.

In order to obtain data $u_0\in L^{p,\mu}$ leading to
instantaneous complete blow-up, we combine this observation with the following well-known 
scaling property of equation \eqref{1.1}: Whenever $u$ is a solution 
of \eqref{1.1} on $\Omega$, then for any $R>0$, any $x_0\in\R^n$ the function
\begin{equation}\label{4.5} 
 u_{R,x_0}(x,t) = R^{-\alpha}u(R^{-1}(x-x_0), R^{-2}t)
\end{equation}
with $\alpha= \frac{2}{p-2}$
is a solution of \eqref{1.1} on the scaled domain  
$$\Omega_{R,x_0}:=\{x\in\R^n;\; R^{-1}(x-x_0)\in\Omega\}.$$
Clearly we may assume that $0\in\Omega$.

\begin{theorem} \label{thm4.1}
Let $0\le w_0\in C_c^\infty(B_1(0))$ with $E_{B_1(0)}(w_0)<0$. 
Set
 $$
  M = M_{w_0} = \sup_{|y|\le 1}\big(|y|^{\alpha}w_0(y)\big), 
 $$
where $\alpha= \frac{2}{p-2}$ as above.  
Then for every initial data $0\le u_0\in C^0(\Omega\setminus\{0\})$ satisfying
 $$
  \liminf_{x\to 0}\big(u_0(x)-M |x|^{-\alpha}\big) > 0
 $$
the minimal solution $u$ to \eqref{1.1} blows up completely instantaneously.
\end{theorem}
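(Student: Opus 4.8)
The idea is to exploit the scaling \eqref{4.5} together with Ball's finite-time blow-up criterion to produce, near the origin, a sequence of rescaled bumps whose blow-up times tend to zero, and then compare the minimal solution against these. More precisely, by hypothesis there is $\delta>0$ and $\rho>0$ such that
\begin{equation*}
  u_0(x)\ge (M+\delta)|x|^{-\alpha}\quad\text{for }0<|x|\le\rho.
\end{equation*}
For each small $R>0$ consider the rescaled bump $w_0^{R}(x):=R^{-\alpha}w_0(R^{-1}x)$, which is supported in $B_R(0)\subset\Omega$ once $R<\rho$, and which by the definition of $M$ satisfies $|x|^{\alpha}w_0^{R}(x)=|R^{-1}x|^{\alpha}w_0(R^{-1}x)\le M$ for all $x$. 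Hence $w_0^{R}\le M|x|^{-\alpha}\le (M+\delta)|x|^{-\alpha}\cdot\frac{M}{M+\delta}$ on its support; in particular $w_0^{R}\le u_0$ on $B_R(0)$, and trivially $w_0^{R}=0\le u_0$ elsewhere. Thus $w_0^{R}\le u_0$ pointwise on $\Omega$.

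The second step is to run the flow from $w_0^{R}$ and invoke the comparison principle. Since $w_0^{R}\in C_c^\infty(B_R(0))$, classical theory gives a smooth solution $w^{R}$ of \eqref{1.1} on $\Omega$ starting from $w_0^{R}$, existing on a maximal interval $[0,T^R)$. By the scaling \eqref{4.5} (applied with $x_0=0$ and this $R$, noting $B_1(0)$ scales to $B_R(0)$, which we may regard as sitting inside $\Omega$ after possibly shrinking $R$ so that $\Omega_{R,0}\supset\overline\Omega$—or more simply by monotonicity of the flow in the domain), one has $w^{R}(x,t)=R^{-\alpha}w^{1}(R^{-1}x,R^{-2}t)$ where $w^{1}$ is the solution on $B_1(0)$ from $w_0$; since $E_{B_1(0)}(w_0)<0$, Ball's argument gives $T^1<\infty$ and hence $T^{R}=R^2T^1\to 0$ as $R\downarrow 0$. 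The comparison principle for \eqref{1.1}—applied to the truncated problems \eqref{4.1} and passed to the limit, exactly as in the construction of the minimal solution, so that $u_n\ge$ the corresponding truncated solution from $w_0^{R}$, and the latter converges up to $w^{R}$ on $[0,T^R)$—yields $u(x,t)\ge w^{R}(x,t)$ for all $(x,t)\in B_R(0)\times[0,T^R)$. Letting $t\uparrow T^{R}$ forces $u(\cdot,t)$ to be unbounded on $B_R(0)$ for every $R$; since $T^R\to 0$, this already shows $u$ is not a bounded solution on any time slice $t>0$.

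The third step upgrades "unbounded" to "$u\equiv\infty$ on all of $\Omega\times]0,\infty[$", i.e. genuine complete instantaneous blow-up. Here one argues as follows: fix any $t_0>0$ and any $R$ with $T^R<t_0$. On $[0,T^R)$ we have $u\ge w^{R}$, and $w^{R}$ blows up \emph{completely} inside $B_R(0)$ in the sense that $\int_{B_R(0)}u_n(x,t)^{p-1}\,dx\to\infty$ as $n\to\infty$ uniformly for $t$ near $T^R$ (this is the standard Baras–Cohen dichotomy: for the truncated approximations of a solution that blows up in finite time in $L^\infty$, the nonlinearity $f_n(u_n)$ is not uniformly integrable in space-time near the blow-up time, for otherwise the limit would be an integral solution existing past $T^R$, contradicting $T^R<\infty$ via the a priori bound that integral solutions propagate). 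Feeding this back into the Duhamel representation \eqref{4.3}–\eqref{4.4} with the strictly positive kernel $\Gamma$: for any $t>T^R$ and any $x\in\Omega$,
\begin{equation*}
  u(x,t)\ge \int_0^{t} S_{t-s}u(\cdot,s)^{p-1}(x)\,ds
  \ge \int_{0}^{T^R}\!\!\int_{B_R(0)}\Gamma(x,y,t-s)\,u(y,s)^{p-1}\,dy\,ds,
\end{equation*}
and since $\Gamma(x,y,t-s)$ is bounded below by a positive constant for $(y,s)$ in the compact set $\overline{B_R(0)}\times[0,T^R]$ (as $t-s\ge t-T^R>0$), the right-hand side is $+\infty$. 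As $t_0>0$ and $x\in\Omega$ were arbitrary, $u\equiv\infty$ on $\Omega\times]0,\infty[$.

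**Main obstacle.** The routine part is the scaling and the pointwise comparison $w_0^{R}\le u_0$; the substantive point is the \emph{completeness} of the blow-up of $w^{R}$—i.e. that a solution which blows up in finite $L^\infty$-time starting from compactly supported smooth data actually forces $\int\!\int f_n(u_n)\to\infty$ near the blow-up time, so that the Duhamel term is infinite afterward rather than merely the sup being infinite at the single instant $T^R$. This is precisely the Baras–Cohen phenomenon in the present setting, and carrying it out requires knowing that \eqref{1.1} for these data admits no integral-solution continuation past $T^R$, which in turn rests on the fact that any integral solution is the minimal one and a local existence estimate for integral solutions would contradict $T^R<\infty$. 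I would cite \cite{Baras-Cohen-1987} (and \cite{Galaktionov-Vazquez-1997} for the $\R^n$ template) for this dichotomy and adapt it to the bounded domain $\Omega$, where the only change is the use of $\Gamma\le G$ and the interior positivity lower bound for $\Gamma$ recorded in Section 3.
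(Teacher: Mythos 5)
Your first half --- rescaling the negative--energy bump to $w_0^R=R^{-\alpha}w_0(R^{-1}\cdot)\le M|x|^{-\alpha}\le u_0$, comparing the minimal solution with the resulting flows via the truncated problems \eqref{4.1}, and noting that their blow-up times $T^R\le R^2T^1\to0$ --- is exactly the paper's starting point and is sound (one small inaccuracy: the hypothesis gives $u_0\ge M|x|^{-\alpha}+\delta_0$ near $0$, not $u_0\ge(M+\delta)|x|^{-\alpha}$; the former suffices for $w_0^R\le u_0$). The genuine gap is in your third step. You reduce the theorem to the claim that a classical solution of \eqref{1.1} blowing up at time $T^R$ must blow up \emph{completely}, ``for otherwise the limit would be an integral solution existing past $T^R$''. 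That dichotomy is the Baras--Cohen theorem, which holds for $p<2^*$ (and for $p=2^*$ by Galaktionov--Vazquez); for the supercritical exponents $p>2^*$ treated here it is not available --- the possibility of continuing a blown-up solution as an integral solution is precisely the open issue motivating this paper, and the local-existence estimates on which the Baras--Cohen contradiction rests fail for general data in this regime (Section 2 needs a smallness condition). Moreover, even granting the dichotomy, your displayed Duhamel bound integrates $s$ only over $[0,T^R]$, and for a type-I blow-up rate $\int_0^{T^R}\!\int_{B_R}u^{p-1}\,dy\,ds$ is finite when $p>2^*$, so the right-hand side need not be $+\infty$; what completeness of the blow-up would control is the contribution from $s>T^R$.

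The paper sidesteps the dichotomy by one extra device you are missing: it uses the \emph{translated} rescalings $w_{R,x_0}$ for all centers $x_0$ in a small ball $B_{\delta}(0)$ (each still lies below $u_0$ at $t=0$, by continuity of $u_0$ away from the origin), so that the blow-up point $x_0+Ry_0$ of the comparison solution sweeps out the open set $B_\delta(Ry_0)$. Hence $u(\cdot,R^2T)=\infty$ on a set of positive Lebesgue measure, and since $R$ ranges over an interval this holds at every sufficiently small time. Strict positivity of $\Gamma$ in the Duhamel identity \eqref{4.3} then forces $u\equiv\infty$ on all of $\Omega\times\,]0,\infty[$. With $x_0=0$ only, as in your argument, you control $u$ along the single curve $x=Ry_0$, a null set, which cannot feed the Duhamel integral --- this is exactly why you were driven to the (unavailable) completeness dichotomy. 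Repair the proof by incorporating the translation argument, and drop the appeal to \cite{Baras-Cohen-1987}.
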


\begin{proof}
By Ball's above result, the solution $w$ to \eqref{1.1} on $B_1(0)\times ]0,T[$
with initial data $w(0)=w_0$ blows up after some finite time $T$ at a point $y_0$.

Fix $R_0>0$ with $B_{R_0}(0)\subset\Omega$ and such that  
$$u_0(x)> M|x|^{-\alpha} \hbox{ for }|x|\le R_0.$$
For $R < R_0$ and $x_0\in\Omega$ with $|x_0|\le R_0-R$ consider the rescaled solutions 
$$w_{R,x_0}(x,t):= R^{-\alpha} w(R^{-1}(x-x_0), R^{-2}t)$$
on $B_R(x_0)\times [0,R^2T[$ that blow up at time $R^2T$.

Since by assumption we have
\begin{equation*}
  w_{R,0}(x,0)=R^{-\alpha}w_0(R^{-1}x)
  \le M|x|^{-\alpha}<u_0(x)\text{ on } B_R(0),
\end{equation*}
by continuity of $u_0$ away from $x=0$ and continuity of $w_0$ there is a number 
$\delta=\delta(R)>0$ such that 
\begin{equation*}
 w_{R,x_0}(x,0) < u_0(x) \text{ on } B_R(x_0)
\end{equation*}
for all $x_0$ with $|x_0|<\delta$. Since in addition
$u\ge 0=w_{R,x_0}$ on $\partial B_R(x_0)\times [0,r^2T[$,
by the maximum principle for any $\varepsilon >0$, any 
$n\ge\|w_{R,x_0}\|_{L^{\infty}(B_R(x_0)\times [0,R^2T-\varepsilon])}$ there holds
\begin{equation*}
 u(x,t)\ge u_n(x,t)\ge w_{R,x_0}(x,t)\text { on } B_R(x_0)\times [0,R^2T-\varepsilon],
\end{equation*}
where $u_n$ solves \eqref{4.1} for each $n\in\N$. 
Passing to the limit $\varepsilon \to 0$, we then find
\begin{equation*}
 \begin{split}
  u(x_0+Ry_0,R^2T)
  &= \Big(S_{R^2T}u_0+\int_0^{R^2T}S_{R^2T-s}f(u(s))ds\Big)(x_0+Ry_0)\\
  &= \lim_{n\to\infty}\Big(S_{R^2T}u_{0n}+\int_0^{R^2T}S_{R^2T-s}f_n(u_n(s))ds\Big)(x_0+Ry_0)\\
  &\ge\lim_{t\uparrow R^2T}w_{R,x_0}(x_0+Ry_0,t)=\infty
 \end{split}
\end{equation*}
for all $x_0\in B_{\delta}(0)$. 

Since $R>0$ may be chosen arbitrarily small, we conclude that for any sufficiently small 
$t>0$ there holds $\mathcal L^n(\{x\in\Omega; u(x,t)=\infty\})>0$.
But then positivity of $\Gamma$ and Duhamel's principle \eqref{4.3} yield
\begin{equation*}
  u(x,t) = \Big(S_tu_0 + \int_0^t S_{t-s}u^{p-1}(s)ds\Big)(x)=\infty.
\end{equation*}
for any $t>0$ and any $x\in\Omega$.
\end{proof}

\subsection{Open problems}
Can data $u(T)$ that lead to instantaneous complete blow-up arise from solutions of \eqref{1.1}
with bounded energy? What is the smallest number $M>0$ so that the conclusion of 
Theorem \ref{thm4.1} holds true? Can one show that at least for exponents $p$ strictly less
than the Joseph-Lundgren \cite{Joseph-Lundgren-1972} exponent
\begin{equation*}
 p_{JL} = 2 + \frac {4}{n-4-2\sqrt{n-1}}\ \hbox{ if } n\ge 11,
 \ p_{JL} = \infty\ \hbox{ if }n\le 10,
\end{equation*}
we have $M=\alpha(n-2-\alpha)=:c_*$, 
where $c_*$ appears as coefficient in the singular solution 
$u_*(x):= c_*|x|^{-\alpha}$ of the time-independent equation \eqref{1.1} on $\R^n$?
(The significance of the exponent $p_{JL}$ is illustrated for instance in Lemma 9.3 of 
\cite{Galaktionov-Vazquez-1997}.)

Hopefully, we will be able to answer some of these questions in the future.

\end{document}